\newcommand{\define}[1]{\textbf{#1}}
\newtheorem{theorem}{THEOREM}[section]
\newtheorem{lemma}[theorem]{LEMMA}
\newtheorem{corollary}[theorem]{COROLLARY}
\newtheorem{property}[theorem]{PROPERTY}
\theoremstyle{definition}
\newtheorem{remark}[theorem]{Remark}
\DeclareMathOperator{\Sym}{Sym}
\DeclareMathOperator{\chara}{char}
\DeclareFontFamily{T1}{pzc}{}
\DeclareFontShape{T1}{pzc}{m}{it}{2 <-> pzcmi8t}{}
\DeclareMathAlphabet{\mathpzc}{T1}{pzc}{m}{it}
\newcommand{\textfrc}[1]{\underline{\frcseries#1}}
\newcommand{\mathfrc}[1]{\text{\textfrc{#1}}}
\title{A categorical approach to loops, neardomains and nearfields} 
\author{Philippe Cara\footnote{The first author was partially
    supported by grant 15.263.08 of the ``Fonds voor Wetenschappelijk
    Onderzoek-Vlaanderen''.}, Rudger Kieboom and  Tina Vervloet\\ 
Department of Mathematics\\
Vrije Universiteit Brussel\\
Pleinlaan 2\\
B-1050 Brussel\\
BELGIUM}
\begin{document}
\maketitle
\begin{abstract}
In this paper we study loops, neardomains and nearfields from a
categorical point of view. By choosing the right kind of morphisms, we
can show that the category of neardomains is equivalent to the
category of sharply 2-transitive groups. The other categories are also
shown to be equivalent with categories whose objects are sets of
permutations with suitable extra properties.

Up to now the equivalence between neardomains and sharply 2-transitive
groups was only known when both categories were equipped with the
obvious isomorphisms as morphisms. We thank Hubert Kiechle for this
observation~\cite{Kiec:pc}. 
\end{abstract}

\nocite{Kiec02,Kerb74,Verv09}

\section{Introduction}
\label{sec:intro}

Loops and nearfields are structures in algebra which generalize
groups and fields, respectively. The first examples of finite
proper nearfields were constructed by L.E. Dickson in~1905. Thirty
years later the finite nearfields were completely classified by
H. Zassenhaus. In~1965 H. Karzel introduced neardomains (which are a
weakening of nearfields) in such a way that there is a one-to-one correspondence with sharply $2$-transitive groups (see~\cite{Karz68}). At that time morphisms were not considered. The still unsolved problem is whether there exist proper neardomains, i.e. neardomains which are 
not nearfields. 

The link between loops and regular (i.e. sharply $1$-transitive)
permutation sets is of a similar but simpler kind.

The present paper describes these links in a uniform way. By
considering the right kind of morphisms we show that loops, 
resp. neardomains are categories equivalent to the categories of
regular permutation sets resp. sharply $2$-transitive groups. The
latter equivalence nicely restricts to an equivalence between the
category of nearfields and the category of sharply $2$-transitive
groups with the property that the translations form a subgroup.

In~\cite{Karz68} the correspondence between sharply $2$-transitive groups and 
neardomains is described. We define morphisms which turn this correspondence 
into an equivalence of categories. Kiechle~\cite{Kiec:pc} was also aware of an 
equivalence of categories but with a more restricted class of morphisms, 
namely isomorphisms.

\section{Loops and regular permutation sets}

\subsection{Loops}
A \define{loop} is a set $L$, together with a binary operation
$(k,l)\mapsto kl$ with identity satisfying the \emph{left and right loop
property}. This means that for every $a,b\in L$ there exist unique
elements $x,y\in L$ such that $ax=b$ and $ya=b$.

A \define{morphism of loops} $(L,.)\rightarrow (L',*)$ is a map
$f\colon L\rightarrow L'$ preserving the operations. This means that  
$f(a.b)=f(a)*f(b)$ for all $a,b\in L$. It follows that $f$ maps the
identity of $L$ onto the identity of $L'$.

We denote by $\mathfrc{Loop}$ the category of all loops together with
all morphisms of loops.
\subsection{Regular permutation sets}
Let $\Omega$ be a set and let $\Sym (\Omega)$ denote the set of all
permutations of $\Omega$. A \define{regular permutation set (r.p.s.)}
on $\Omega$ is
a subset $M$ of $\Sym(\Omega)$ such that the identity permutation 
$1_{\Omega}$ is in $M$ and 
$M$ acts regularly on $\Omega$, i.e. $\forall
  \alpha,\beta\in\Omega\colon \exists!\; m\in M\colon m(\alpha)=\beta$.

We construct a category $\mathfrc{rps}$ whose objects are triples $(M,\Omega,
\omega)$, where $M$ is an r.p.s. on $\Omega$ and $\omega\in\Omega$ is
a \define{base point}.

A \define{morphism of r.p.s.} $(M,\Omega, \omega)\rightarrow
(N,\Sigma,\sigma)$ is a pair $(f,\Phi)$ such that 
$f\colon M\rightarrow N$ and $\Phi\colon\Omega\rightarrow\Sigma$
  are maps satisfying $\Phi(\omega)=\sigma$ and 
$\forall m\in M, \forall\alpha\in\Omega\colon
  \Phi(m(\alpha))=(f(m))(\Phi(\alpha))$. The latter property can be
  summarized by the following commutative diagram in which the
  horizontal maps are the (left) actions of $M$ (resp. $N$) on
  the set $\Omega$ (resp. $\Sigma$).

\begin{displaymath}
\xymatrix{
M\ar[d]_{f\times\Phi}\times\Omega\ar[r] & {\Omega}\ar[d]^{\Phi}\\
N\times\Sigma\ar[r]& {\Sigma}}
\end{displaymath}

Composition of morphisms $(f,\Phi)\colon (M,\Omega,\omega)\rightarrow
(N,\Sigma,\sigma)$ and $(g,\Psi)\colon (N,\Sigma,\sigma)\rightarrow
(P,\Gamma,\tau)$ is defined by $(g\circ f,\Psi\circ\Phi)$. The
identity $1_{(M,\Omega,\omega)}$ is defined as $(1_M,1_{\Omega})$. 

One easily verifies that $\mathfrc{rps}$ is a category.

Notice that, by regularity, a morphism $(f,\Phi)\colon (M,\Omega,
\omega)\rightarrow (N,\Sigma,\sigma)$ satisfies $f(1_{\Omega})=1_{\Sigma}$. 

\subsection{Equivalence of the categories $\mathfrc{Loop}$ and $\mathfrc{rps}$}
The one-to-one correspondence between loops and regular permutation sets is folklore (see for instance~\cite{Capo03}). However we think it is useful to describe an explicit categorical equivalence.
\smallskip

Let $(M,\Omega,\omega)$ be an object of $\mathfrc{rps}$. By
regularity, the map $\mu\colon M\rightarrow\Omega\colon m\mapsto
m(\omega)$ is a bijection such that $\mu(1_{\Omega})=\omega$. 
Now define an operation 
$$
\begin{array}{rrcl}
\otimes_{\omega}\colon & M\times M & \rightarrow & M\\
& (m,n) & \mapsto & m\otimes_{\omega}n:=\mu^{-1}((m\circ n)(\omega))
\end{array}
$$

Notice that $m\otimes_{\omega}n$ is equivalently defined by
$(m\otimes_{\omega}n)(\omega)=(m\circ n)(\omega)$
(compare~\cite[p. 618]{Capo03}). 

It is easy to check that
\begin{property}
The pair $(M,\otimes_{\omega})$ is a loop with identity $1_{\Omega}$.
\end{property}

We can also construct a loop structure on the set $\Omega$. 
Let $(M,\Omega,\omega)$ be an object of $\mathfrc{rps}$. We still have
the bijection $\mu\colon M\rightarrow\Omega$. We define the operation
$$
\begin{array}{rrcl}
\cdot_{\omega}\colon &\Omega\times \Omega & \rightarrow & \Omega\\
&(\alpha,\beta) & \mapsto & \alpha\cdot_{\omega}\beta :=
(\mu^{-1}(\alpha)\otimes_{\omega}\mu^{-1}(\beta))(\omega) = (\mu^{-1}(\alpha)\circ\mu^{-1}(\beta))(\omega)
\end{array}
$$

One easily verifies the following
\begin{property}\label{prop0}
The pair $(\Omega,\cdot_{\omega})$ is a loop with identity $\omega$, and
$\mu\colon (M,\otimes_{\omega})\rightarrow(\Omega,\cdot_{\omega})$ is a
loop isomorphism.
\end{property}

The following property gives a useful characterization of morphisms of
regular permutation sets.

\begin{property}\label{prop1}
Let $(M,\Omega,\omega)$, $(N,\Sigma, \sigma)$ be objects of $\mathfrc{rps}$,
$f\colon M\rightarrow N$ and $\Phi\colon\Omega\rightarrow\Sigma$ maps
with $\Phi(\omega)=\sigma$. Then $(f,\Phi)$ is a morphism $(M,\Omega
,\omega)\rightarrow (N,\Sigma ,\sigma)$ if and only if the following conditions
are both satisfied.
\begin{enumerate}
\item\label{prop1a} $\forall m\in M\colon f(m)(\sigma)=\Phi(m(\omega))$
\item\label{prop1b} $f\colon (M,\otimes_{\omega})\rightarrow
  (N,\otimes_{\sigma})$ is a morphism of loops.
\end{enumerate}
\end{property}
\begin{proof}
Let  $(f,\Phi)\colon (M,\Omega,\omega)\rightarrow (N,\Sigma,\sigma)$
be a morphism, then (\ref{prop1a}) follows immediately, since $\forall
m\in M\colon f(m)(\sigma)=f(m)(\Phi(\omega))=\Phi(m(\omega))$. 

In order to prove (\ref{prop1b}) it suffices, by the regular action of
$N$ on $\Sigma$, to establish that $\forall m_1,m_2\in M\colon
(f(m_1\otimes_{\omega}
m_2))(\sigma)=(f(m_1)\otimes_{\sigma}f(m_2))(\sigma)$. \\ The left hand
side equals $\Phi((m_1\otimes_{\omega} m_2)(\omega)) = \Phi((m_1\circ
m_2)(\omega))=\Phi(m_1(m_2(\omega))) =$ $(f(m_1))(\Phi(m_2(\omega))) =$
$(f(m_1))((f(m_2))(\Phi(\omega))) = (f(m_1)\circ f(m_2))(\sigma) =
(f(m_1)\otimes_{\sigma}f(m_2))(\sigma)$. 

Conversely (\ref{prop1a}) implies that $f\colon M\rightarrow N$,
$\Phi\colon \Omega\rightarrow\Sigma$ with $\Phi(\omega)=\sigma$
satisfy $f(m)(\Phi(\alpha))=\Phi(m(\alpha))$ for all $m\in M$, and
for $\alpha=\omega$. We have to show that this condition holds for all
$\alpha\in \Omega$. For any such $\alpha$ there exists a unique $m'\in
M$ such that $m'(\omega)=\alpha$ (by the regular action of $M$ on
$\Omega$). Then it follows that for all $m\in M$ and all
$\alpha\in\Omega$ we have
$f(m)(\Phi(\alpha)) = f(m)(\Phi(m'(\omega))) \stackrel{(\ref{prop1a})}{=}
f(m)(f(m')(\sigma)) = (f(m)\circ f(m'))(\sigma) =
(f(m)\otimes_{\sigma} f(m'))(\sigma) \stackrel{(\ref{prop1b})}{=}
(f(m\otimes_{\omega}m'))(\sigma) \stackrel{(\ref{prop1a})}{=}
\Phi((m\otimes_{\omega}m')(\omega)) = \Phi((m\circ m')(\omega)) =
\Phi(m(\alpha))$.
\end{proof}

\begin{corollary}
Let $(f,\Phi)\in
\mathfrc{rps}((M,\Omega,\omega),(N,\Sigma,\sigma))$. Then $\Phi\colon
(\Omega,\cdot_{\omega})\rightarrow (\Sigma,\cdot_{\sigma})$ is a loop
  homomorphism. 
\end{corollary}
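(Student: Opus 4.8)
The plan is to recognise $\Phi$ as a composite of loop homomorphisms and invoke Properties~\ref{prop0} and~\ref{prop1}, so that essentially no new computation is needed. Write $\mu\colon M\to\Omega$, $m\mapsto m(\omega)$ and $\nu\colon N\to\Sigma$, $n\mapsto n(\sigma)$ for the two canonical bijections. By Property~\ref{prop0}, applied once to $(M,\Omega,\omega)$ and once to $(N,\Sigma,\sigma)$, the map $\mu$ is a loop isomorphism $(M,\otimes_{\omega})\to(\Omega,\cdot_{\omega})$ and $\nu$ is a loop isomorphism $(N,\otimes_{\sigma})\to(\Sigma,\cdot_{\sigma})$; in particular $\mu^{-1}$ is a loop homomorphism.

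First I would rewrite condition~\ref{prop1a} of Property~\ref{prop1} in terms of these bijections. The identity $f(m)(\sigma)=\Phi(m(\omega))$, valid for all $m\in M$, says precisely that $\nu\circ f=\Phi\circ\mu$ as maps $M\to\Sigma$, and hence $\Phi=\nu\circ f\circ\mu^{-1}$.

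Next I would invoke condition~\ref{prop1b} of Property~\ref{prop1}: $f\colon(M,\otimes_{\omega})\to(N,\otimes_{\sigma})$ is a morphism of loops. Therefore $\Phi=\nu\circ f\circ\mu^{-1}$ is a composite of three morphisms of loops (the two isomorphisms $\mu^{-1}$ and $\nu$, and $f$ in between), and since the composite of loop homomorphisms is again a loop homomorphism, $\Phi\colon(\Omega,\cdot_{\omega})\to(\Sigma,\cdot_{\sigma})$ is a loop homomorphism. It automatically sends $\omega$ to $\sigma$, consistently with the hypothesis $\Phi(\omega)=\sigma$.

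There is essentially no obstacle here; the only point requiring care is the bookkeeping — making sure that the bijection $\nu$ on the target side is the one furnished by Property~\ref{prop0} for $(N,\Sigma,\sigma)$, and that the equality $\nu\circ f=\Phi\circ\mu$ is read off correctly from~\ref{prop1a}. One could alternatively argue directly: for $\alpha,\beta\in\Omega$ use $\alpha\cdot_{\omega}\beta=(\mu^{-1}(\alpha)\circ\mu^{-1}(\beta))(\omega)$ together with the intertwining property of the morphism $(f,\Phi)$ and~\ref{prop1a} to expand $\Phi(\alpha\cdot_{\omega}\beta)$ and collapse it to $\Phi(\alpha)\cdot_{\sigma}\Phi(\beta)$; but the factorisation argument is cleaner and simply reuses what was already established in Property~\ref{prop1}.
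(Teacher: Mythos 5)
Your proof is correct and follows essentially the route the paper indicates: the paper's one-line proof cites exactly the same ingredients (the definitions of $\cdot_{\omega}$, $\cdot_{\sigma}$ together with Properties~\ref{prop0} and~\ref{prop1}), and your factorisation $\Phi=\nu\circ f\circ\mu^{-1}$ into the loop isomorphisms from Property~\ref{prop0} and the loop morphism $f$ from Property~\ref{prop1} is a clean way of spelling that out.
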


The proof easily follows from the definition of the operations
$\cdot_{\omega}$ and $\cdot_{\sigma}$ and Properties~\ref{prop0} and
\ref{prop1}. 

Now let $(\Omega,\cdot)$ be a loop. For $\alpha\in\Omega$ we  define $\lambda_{\alpha}\colon \Omega\rightarrow\Omega\colon\gamma\mapsto\alpha\cdot\gamma$. 

\begin{property}\label{newprop}
For a loop $(\Omega,\cdot)$ with identity $\omega$ we write $L=\{\lambda_{\alpha}\mid \alpha\in \Omega\}$, the set of left translations of $\Omega$. The triple $(L,\Omega,\omega)$ is a r.p.s..
\end{property}
\begin{proof}
Since
$(\Omega,\cdot)$ is a (left) loop, $L$ is a subset of
$\mathrm{Sym}(\Omega)$. Moreover $L$ acts regularly on $\Omega$ by the
(right) loop property of $\Omega$. Also
$1_{\Omega}=\lambda_{\omega}\in L$. Hence $(L,\Omega ,\omega)$ is a r.p.s..
\end{proof}

We now have a correspondence between the objects of $\mathfrc{rps}$ and $\mathfrc{Loop}$ which can be extended to an equivalence of categories.

\begin{theorem}
$$
\begin{array}{rccc}
F\colon & \mathfrc{rps}&\longrightarrow & \mathfrc{Loop}\\[.3cm]
& (M,\Omega, \omega)& \longmapsto & (\Omega,\cdot_{\omega})\\
& | & & |\\
&(f,\Phi) & \mapsto & \Phi\\
& \downarrow & & \downarrow\\
& (N,\Sigma,\sigma) & \longmapsto & (\Sigma, \cdot_{\sigma})
\end{array}
$$
is an equivalence of categories.
\end{theorem}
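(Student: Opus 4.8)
The plan is to exhibit an explicit quasi-inverse $G\colon\mathfrc{Loop}\to\mathfrc{rps}$ and verify that $G\circ F$ and $F\circ G$ are the identity functors, so that $F$ is in fact an isomorphism of categories (hence, a fortiori, an equivalence). On objects, $G$ sends a loop $(\Omega,\cdot)$ with identity $\omega$ to the triple $(L,\Omega,\omega)$ of Property~\ref{newprop}, where $L=\{\lambda_{\alpha}\mid\alpha\in\Omega\}$ is the set of left translations. On a morphism of loops $g\colon(\Omega,\cdot)\to(\Sigma,\ast)$ (which automatically sends the identity $\omega$ to the identity $\sigma$), set $G(g)=(f,g)$ with $f\colon L\to L'$ defined by $f(\lambda_{\alpha})=\lambda'_{g(\alpha)}$; this is well defined because $\lambda_{\alpha}(\omega)=\alpha$, so $\alpha\mapsto\lambda_{\alpha}$ is a bijection $\Omega\to L$. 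The first task is to check that $(f,g)$ is genuinely a morphism of r.p.s.: the base-point condition is just $g(\omega)=\sigma$, and the compatibility square $g(\lambda_{\alpha}(\gamma))=f(\lambda_{\alpha})(g(\gamma))$ unwinds to $g(\alpha\cdot\gamma)=g(\alpha)\ast g(\gamma)$, which holds since $g$ is a loop morphism. Preservation of composition and identities by $G$ is immediate from the formula for $f$ on translations.

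Next I would record that $F$ is a functor, which is essentially already done: on objects $F(M,\Omega,\omega)=(\Omega,\cdot_{\omega})$ is a loop by Property~\ref{prop0}, on morphisms $F(f,\Phi)=\Phi$ is a loop homomorphism by the Corollary following Property~\ref{prop1}, and since $F$ merely forgets the first component of a morphism it visibly preserves composition and identities.

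The core of the proof is then the identification of the two composites. Given a loop $(\Omega,\cdot)$ with identity $\omega$, write $\mu\colon L\to\Omega$, $\lambda_{\alpha}\mapsto\alpha$; then $\mu^{-1}(\alpha)=\lambda_{\alpha}$, so $\alpha\cdot_{\omega}\beta=(\mu^{-1}(\alpha)\circ\mu^{-1}(\beta))(\omega)=\lambda_{\alpha}(\beta)=\alpha\cdot\beta$. Thus $F(G(\Omega,\cdot))=(\Omega,\cdot)$ on the nose, and on morphisms $F(G(g))=g$, so $F\circ G=\mathrm{id}_{\mathfrc{Loop}}$. Conversely, starting from an object $(M,\Omega,\omega)$, the identity $\mu^{-1}(\alpha)(\gamma)=(\mu^{-1}(\alpha)\circ\mu^{-1}(\gamma))(\omega)=\alpha\cdot_{\omega}\gamma$ shows that the left translation $\gamma\mapsto\alpha\cdot_{\omega}\gamma$ of $(\Omega,\cdot_{\omega})$ is exactly the permutation $\mu^{-1}(\alpha)\in M$; since $\mu$ is a bijection, the set of all these left translations is precisely $M$, and the chosen base point is again $\omega$, so $G(F(M,\Omega,\omega))=(M,\Omega,\omega)$ at the level of objects.

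The one place that needs care — the main, if mild, obstacle — is that $G\circ F$ is the identity on \emph{morphisms}. Unwinding the definitions, the first component of $G(F(f,\Phi))$ sends $m\in M$, regarded as the left translation $\lambda_{m(\omega)}$ of $(\Omega,\cdot_{\omega})$, to the left translation of $(\Sigma,\cdot_{\sigma})$ indexed by $\Phi(m(\omega))$, that is, to the unique $n\in N$ with $n(\sigma)=\Phi(m(\omega))$. By part~(\ref{prop1a}) of Property~\ref{prop1} this $n$ equals $f(m)$, so the reconstructed first component is $f$ and $G\circ F=\mathrm{id}_{\mathfrc{rps}}$, finishing the proof. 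If one would rather not pass through an on-the-nose isomorphism, the same ingredients show directly that $F$ is essentially surjective, by $\alpha\cdot_{\omega}\beta=\alpha\cdot\beta$; faithful, since $(f,\Phi)$ is determined by $\Phi$ because $f(m)$ is pinned down by $f(m)(\sigma)=\Phi(m(\omega))$ and regularity; and full, by lifting $g\colon(\Omega,\cdot_{\omega})\to(\Sigma,\cdot_{\sigma})$ to $(f,g)$ with $f=\mu_{N}^{-1}\circ g\circ\mu_{M}$, a morphism by Properties~\ref{prop0} and~\ref{prop1}.
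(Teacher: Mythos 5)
Your proof is correct, but it is organized differently from the paper's. The paper invokes Borceux's criterion and checks the three properties directly: faithfulness (recover $f$ from $\Phi$ via $f(m)(\sigma)=\Phi(m(\omega))$ and regularity of $N$ on $\Sigma$), fullness (lift $\Phi$ to $(f_\Phi,\Phi)$ with $f_\Phi=\nu^{-1}\circ\Phi\circ\mu$, using Property~\ref{prop1}), and surjectivity on objects via the left-translation r.p.s.\ and the computation $\alpha\cdot_{\omega}\beta=\alpha\cdot\beta$. You instead package the left-translation construction into an explicit functor $G\colon\mathfrc{Loop}\to\mathfrc{rps}$ and verify $F\circ G=1_{\mathfrc{Loop}}$ and $G\circ F=1_{\mathfrc{rps}}$ on the nose, so you prove the stronger statement that $F$ is an isomorphism of categories; this makes precise the paper's closing remark that $F$ is ``even strictly surjective on objects.'' The essential computations coincide: your identification $\lambda_{\alpha}=\mu^{-1}(\alpha)$, giving $L'=M$ and $\cdot_{\omega}=\cdot$, is the paper's object-level argument, and your key step that the first component of $G(F(f,\Phi))$ equals $f$ is exactly the paper's faithfulness/fullness mechanism, condition~(\ref{prop1a}) of Property~\ref{prop1}) plus regularity. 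The trade-off is that your route must additionally verify that $G$ is well defined and functorial (that $(f,g)$ is an r.p.s.\ morphism and that $G$ preserves composition and identities), which you do; the paper's route avoids constructing $G$ at all but yields only an equivalence rather than an isomorphism. Your closing sketch of the full/faithful/essentially-surjective argument is precisely the paper's proof.
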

\begin{proof}
The functorial properties of $F$ are easily checked. We use
\cite[prop. 3.4.3(4)]{Borc94} to show that $F$ is an equivalence of categories.

$F$ is faithful when for every pair of objects $(M,\Omega, \omega)$,
$(N,\Sigma,\sigma)$ 
of $\mathfrc{rps}$ the induced map\\ $F_{(M,\Omega, \omega),
  (N,\Sigma,\sigma)}\colon \mathfrc{rps}((M,\Omega,
\omega),(N,\Sigma,\sigma))\rightarrow
\mathfrc{Loop}((\Omega,\cdot_{\omega}),(\Sigma,\cdot_{\sigma}))$ is
injective. \\
Let $(f,\Phi), (f',\Phi')\in  \mathfrc{rps}((M,\Omega,
\omega),(N,\Sigma,\sigma))$ be such that $F(f,\Phi)=F(f',\Phi')$. Then
$\Phi=\Phi'$ follows immediately.

Hence, for all $(m,\alpha)\in M\times\Omega$, we have that
$(f(m))(\Phi(\alpha))=\Phi(m(\alpha)) =
\Phi'(m(\alpha))=(f'(m))(\Phi'(\alpha))$. In particular, for
$\alpha=\omega$, it follows that
$\nu(f(m))=(f(m))(\sigma)=(f(m))(\Phi(\omega))=(f'(m))(\Phi'(\omega))=(f'(m))(\sigma)=\nu(f'(m))$
(where $\nu\colon N\rightarrow \Sigma\colon n\mapsto n(\sigma)$ is the
bijection analogous to $\mu$). Hence $f(m)=f'(m)$ for all $m\in M$ and
thus $f=f'$.

$F$ is full when all induced maps $F_{(M,\Omega, \omega),
  (N,\Sigma,\sigma)}$ (as above) are surjective. Let
$\Phi\in\mathfrc{Loop}((\Omega,\cdot_{\omega}),(\Sigma,\cdot_{\sigma}))$,
then $\Phi(\omega)=\sigma$ (since $\omega$ and $\sigma$ are identities in the
respective loops). Define $f_{\Phi}\colon M\rightarrow N\colon
m\mapsto (\nu^{-1}\circ\Phi\circ\mu)(m)$ (with $\mu$ and $\nu$ as
above). One uses Property~\ref{prop1} and the regularity of the action
of $N$ on $\Sigma$ to show that $(f_{\Phi},\Phi)\in
\mathfrc{rps}((M,\Omega, \omega),(N,\Sigma,\sigma))$. Clearly
$F(f_{\Phi},\Phi)=\Phi$. 

We now take a loop $(\Omega,\cdot)$ with identity $\omega$ with its left translations as defined above. We then know (Prop.~\ref{newprop}) that $(L,\Omega,\omega)$ is a r.p.s..
For $\alpha,\beta\in \Omega$ we have that
$(\lambda_{\alpha}\otimes_{\omega}\lambda_{\beta})(\omega) =
(\lambda_{\alpha}\circ\lambda_{\beta})(\omega)=\alpha\cdot(\beta\cdot\omega)
= \alpha\cdot\beta = (\alpha\cdot\beta)\cdot \omega =
\lambda_{\alpha\cdot\beta}(\omega)$. Again by regularity, it follows
that $\lambda_{\alpha}\otimes_{\omega}\lambda_{\beta} =
\lambda_{\alpha\cdot\beta}$. Hence $F(L,\Omega,\omega) =
(\Omega,\cdot_{\omega})$ where $\alpha\cdot_{\omega}\beta =
(\mu^{-1}(\alpha)\circ \mu^{-1}(\beta))(\omega) =
(\lambda_{\alpha}\circ\lambda_{\beta})(\omega)=\alpha\cdot\beta$, which
shows that $(\Omega,\cdot)=F(L,\Omega,\omega)$. So $F$ turns out to be
even strictly surjective on objects.
\end{proof}

\section{Neardomains, nearfields and sharply 2-transitive groups}

\subsection{Neardomains and nearfields}
A triple $(F,+,.)$ is said to be a \define{neardomain} if
\begin{enumerate}
\item\label{cond1} $(F,+)$ is a loop with neutral element $0$;
\item\label{cond2} $\forall a,b\in F\colon a+b=0\Rightarrow b+a=0$;
\item $(F\setminus\{0\},.)$ is a group (with neutral element $1$);
\item $\forall a\in F\colon 0.a=0$;
\item\label{cond5} $\forall a,b,c\in F\colon a.(b+c)=a.b+a.c$;
\item\label{rule6} $\forall a,b\in F\colon \exists d_{a,b}\in F\setminus\{0\}\colon
  (\forall x\in F\colon a+(b+x) = (a+b)+d_{a,b}.x)$.
\end{enumerate}
Notice that $(F,+,.)$ is a nearfield if and only if all $d_{a,b}$ are
$1$. In that case $(F,+)$ is a group. Also notice that \ref{cond1} and
\ref{cond5} imply that $\forall a\in F\colon a.0=0$. 

\begin{theorem}[see \cite{Karz68}, \cite{Kerb74} or \cite{Kiec02}]
A finite neardomain is a nearfield.
\end{theorem}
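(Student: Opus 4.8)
The plan is to realise the finite neardomain $(F,+,\cdot)$ inside an associated permutation group and then to appeal to the classical theorem of Frobenius on the existence of a Frobenius kernel. For $a\in F$ set $\tau_a\colon x\mapsto a+x$ and for $c\in F\setminus\{0\}$ set $m_c\colon x\mapsto c\cdot x$. By the (left and right) loop property each $\tau_a$ is a permutation of $F$, and each $m_c$ is a permutation of $F$ fixing $0$, since $c\cdot 0=0$. A one-line computation with the distributive law~\ref{cond5} gives $m_c\circ\tau_a=\tau_{c\cdot a}\circ m_c$, while axiom~\ref{rule6} reads $\tau_a\circ\tau_b=\tau_{a+b}\circ m_{d_{a,b}}$; together these show that $G:=\{\tau_a\circ m_c\mid a\in F,\ c\in F\setminus\{0\}\}$ is closed under composition. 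Being a nonempty finite subset of $\Sym(F)$ closed under composition, $G$ is a subgroup, of order $|F|\cdot(|F|-1)$. It is transitive (via the $\tau_a$), its stabiliser of $0$ equals $H=\{m_c\mid c\in F\setminus\{0\}\}$, and $H$ acts regularly on $F\setminus\{0\}$ because $m_c(1)=c$; since an element of $G$ fixing both $0$ and $1$ is forced to be the identity, $G$ is in fact sharply $2$-transitive.

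Next I would observe that $G$, with point stabiliser $H$, is a Frobenius group: $H\neq 1$ as soon as $|F|\geq 3$ (the cases $|F|\leq 2$ being trivial), and by sharp $2$-transitivity any two distinct point stabilisers of $G$ intersect trivially. By Frobenius' theorem the finite Frobenius group $G$ has a kernel $N$: a normal subgroup consisting of the identity together with all fixed-point-free elements of $G$, which acts regularly on $F$ and therefore has order $|F|$.

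The crux is the identification of $N$ with the translation set $T=\{\tau_a\mid a\in F\}$. For $a\neq 0$ the equation $a+x=x$ forces $a=0$ by right cancellation in the loop $(F,+)$, so every non-identity $\tau_a$ is fixed-point-free and hence belongs to $N$; thus $T\subseteq N$, and since $a\mapsto\tau_a$ is injective, $|T|=|F|=|N|$, whence $T=N$. In particular $T$ is closed under composition. Now $\tau_a\circ\tau_b$ sends $0$ to $a+(b+0)=a+b$, and $T$ acts regularly on $F$, so $\tau_a\circ\tau_b=\tau_{a+b}$; that is, $a+(b+x)=(a+b)+x$ for all $x$, so $(F,+)$ is associative. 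Comparing this with axiom~\ref{rule6} gives $d_{a,b}\cdot x=x$ for all $x$, hence $d_{a,b}=1$ for all $a,b\in F$, and by the observation recorded after the axioms $F$ is a nearfield.

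The single non-elementary ingredient is Frobenius' theorem — equivalently, the fact that a finite sharply $2$-transitive group is an affine group over a nearfield; I expect this to be the only real obstacle, everything else being bookkeeping with the loop and distributivity axioms. It is precisely finiteness that makes a Frobenius kernel available, which is also why the corresponding question for infinite neardomains, namely the existence of proper neardomains, is still open.
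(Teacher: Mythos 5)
Your proof is correct: forming the sharply $2$-transitive group $G=\{x\mapsto a+cx\}$, applying Frobenius' theorem to get a regular normal kernel, and identifying that kernel with the translations $\tau_a$ so that addition becomes associative and all $d_{a,b}=1$ is exactly the classical argument. The paper gives no proof of its own, only citations to Karzel, Kerby and Kiechle, and your argument is essentially the one found in those references, with Frobenius' theorem correctly isolated as the only non-elementary ingredient (and the reason the infinite case remains open).
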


It is still an open problem whether there exists a (necessarily
infinite) neardomain which is not a nearfield.

We define the category $\mathfrc{n-Dom}$ of neardomains where objects
are neardomains and  morphisms are maps preserving both operations.

\begin{property}
Neardomain morphisms are injective.
\end{property}
\begin{proof}
Let $f\colon (F,+,.)\longrightarrow (F',+',.')$ be a morphism of
neardomains. 
Suppose $f(x)=f(y)$ for some $x,y\in F$. Since $(F,+,.)$ is a neardomain we
have by conditions \ref{cond1}, \ref{cond2} a unique additive inverse
$-y$ of $y$. Then, 
$$
f(x+(-y))=f(x)+'f(-y)=f(y)+'f(-y)=f(y+(-y))=f(0)=0'.
$$
If $x+(-y)\neq 0$ this element has an inverse $z$ in
$(F\setminus\{0\},.)$. Hence we get
$$
1'=f(1)=f((x+(-y)).z)=f(x+(-y)).'f(z)=0'.'f(z)=0',
$$ a contradiction.
\end{proof}

\begin{theorem}\label{trm:3.3}
Let $(F,+,.)$ be a neardomain and let
$$
T_2(F) = \{\tau_{a,b}\colon F\rightarrow F\colon x \mapsto a+bx\mid
a\in F, b\in F\setminus\{0\}\}
$$
Then $(T_2(F),\circ)$ is a group whose action on $F$ is sharply
$2$-transitive, i.e. for any two ordered
pairs $(\alpha_1,\alpha_2)$, $(\beta_1,\beta_2)$ of points of $F$
with $\alpha_1\neq \alpha_2$, $\beta_1\neq\beta_2$ there exists a
unique element $\tau_{a,b}\in T_2(F)$ such that
$\tau_{a,b}(\alpha_1)=\beta_1$ and $\tau_{a,b}(\alpha_2)=\beta_2$.  
\end{theorem}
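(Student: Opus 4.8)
The plan is to verify the group axioms for $(T_2(F),\circ)$ by a direct computation with the neardomain axioms, and then to deduce sharp $2$-transitivity from the behaviour of the point stabiliser of $0$. First I would note that each $\tau_{a,b}$ is a bijection of $F$: the map $x\mapsto bx$ is a bijection since $0$ is absorbing for multiplication and, on $F\setminus\{0\}$, it is left multiplication by $b$ in the group $(F\setminus\{0\},\cdot)$; and $y\mapsto a+y$ is a bijection because $(F,+)$ is a loop (condition~\ref{cond1}); hence $\tau_{a,b}\in\Sym(F)$. Then, for arbitrary $x\in F$, using left distributivity (condition~\ref{cond5}), associativity of multiplication and condition~\ref{rule6}, one computes
$$(\tau_{a,b}\circ\tau_{c,d})(x)=a+\bigl(bc+(bd)x\bigr)=(a+bc)+\bigl(d_{a,bc}\cdot bd\bigr)x ,$$
where the case $x=0$ must be handled separately whenever multiplicative associativity is used (every term carrying a factor $x$ vanishes then, $0$ being absorbing), and where $d_{a,bc}\cdot bd\neq 0$ as a product of nonzero elements. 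Thus $\tau_{a,b}\circ\tau_{c,d}=\tau_{a+bc,\;d_{a,bc}\cdot bd}\in T_2(F)$, so $T_2(F)$ is closed under composition; associativity is automatic for permutations, and $\tau_{0,1}=1_F$ is a two-sided identity.

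To see that $T_2(F)$ is a group it then remains to produce inverses inside $T_2(F)$. Given $\tau_{a,b}$, since $b\neq 0$ and $(F,+)$ is a loop one can solve $a+bc=0$ for $c$, and afterwards $d_{a,bc}\cdot bd=1$ for $d$ (using that the nonzero elements form a group); by the composition formula above, $\tau_{c,d}$ is then a right inverse of $\tau_{a,b}$. Since a monoid in which every element has a right inverse is a group, $(T_2(F),\circ)$ is a group.

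For the action, note $\tau_{a,b}(0)=a$, so already the ``translations'' $\tau_{a,1}$ act transitively on $F$ (solve $a+\alpha=\beta$), while the stabiliser of $0$ in $T_2(F)$ is exactly $\{\tau_{0,b}\mid b\in F\setminus\{0\}\}$; as $\tau_{0,b}(x)=bx$, this stabiliser is a copy of $(F\setminus\{0\},\cdot)$ acting on $F\setminus\{0\}$ by left translation, hence regularly, in particular transitively. Invoking the standard fact that a group transitive on a set $X$ whose stabiliser of a point $x_0$ is transitive on $X\setminus\{x_0\}$ is $2$-transitive, we obtain: whenever $\alpha_1\neq\alpha_2$ and $\beta_1\neq\beta_2$ there is \emph{some} $\tau_{a,b}\in T_2(F)$ with $\tau_{a,b}(\alpha_1)=\beta_1$ and $\tau_{a,b}(\alpha_2)=\beta_2$.

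It remains to prove uniqueness. Since $T_2(F)$ is a group acting transitively on the set of ordered pairs of distinct points, this amounts to showing that the pointwise stabiliser of one such pair is trivial, and I would take the pair $(0,1)$ (here $1\neq 0$ because $1\in F\setminus\{0\}$): $\tau_{a,b}(0)=0$ forces $a=0$, and then $\tau_{0,b}(1)=1$ forces $b=b\cdot 1=1$, so $\tau_{a,b}=1_F$; conjugating an arbitrary two-point stabiliser into this one by an element furnished by the previous paragraph shows every two-point stabiliser is trivial, which is precisely the uniqueness clause. (Incidentally, evaluating at $0$ and then at $1$ shows $\tau_{a,b}=\tau_{c,d}$ implies $(a,b)=(c,d)$.) I expect the composition formula to be the main obstacle: it is the only step in which condition~\ref{rule6} and the interplay of the additive loop with multiplication really enter, and carrying the factor $d_{a,bc}$ correctly through the computation — together with the degenerate case $x=0$ in the uses of multiplicative associativity — is what makes closure, and hence the existence of inverses, the delicate part.
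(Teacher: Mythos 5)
Your proof is correct. Note, however, that the paper itself does not prove Theorem~\ref{trm:3.3}: it simply cites Karzel (5.1), Kerby (6.1) and Kiechle (7.8), so there is no in-paper argument to compare with; what you have supplied is a self-contained verification of the cited result. Your key computation, the closure formula $\tau_{a,b}\circ\tau_{c,d}=\tau_{a+bc,\;d_{a,bc}bd}$, is exactly the identity the paper relies on later (in the proof of Theorem~\ref{trm:3.8}, where $f_\Phi(\tau_{a,b}\circ\tau_{k,l})=f_\Phi(\tau_{a+bk,d_{a,bk}bl})$ is used), and you handle the genuinely delicate points correctly: the separate treatment of $x=0$ wherever multiplicative associativity is invoked (since associativity is only guaranteed in $(F\setminus\{0\},\cdot)$, with $0$ absorbing by axioms~\ref{cond1} and \ref{cond5}), the nonvanishing of the coefficient $d_{a,bc}bd$, the existence of inverses via solving $a+bc=0$ and then $d_{a,bc}bd=1$, and the reduction of uniqueness to the triviality of the stabiliser of the pair $(0,1)$ together with a conjugation argument (legitimate once existence, i.e.\ transitivity on ordered pairs of distinct points, is established). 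The only cosmetic remark is that the appeal to ``a monoid with right inverses is a group'' is superfluous: since the elements are permutations, a right inverse in $\Sym(F)$ is already the inverse, so closure of $T_2(F)$ under the formula immediately exhibits $\tau_{a,b}^{-1}=\tau_{c,d}\in T_2(F)$ and $T_2(F)$ is a subgroup of $\Sym(F)$.
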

\begin{proof}
See \cite[(5.1)]{Karz68}, \cite[(6.1)]{Kerb74} or \cite[(7.8)]{Kiec02}.
\end{proof}

\subsection{Sharply 2-transitive groups and involutions}\label{sec:3.2}

Let $G$ be a sharply $2$-transitive permutation group on a set
$\Omega$ with $\#\Omega \geq 2$.

We denote by $J$ the set of involutions in $G$, i.e.
$$
J = \{g\in G\mid g^2=1_{\Omega}\neq g\}
$$

One can quickly see that $J$ is never empty. Indeed, as $\#\Omega \geq 2$, we 
can take $\alpha\neq\beta$ in $\Omega$ and find a (unique) $g\in G$ with 
$g(\alpha)=\beta$ and $g(\beta) = \alpha$. Such $g$ must have order 
$2$ (since $g^2$ fixes both $\alpha$ and $\beta$, implying $g^2=1_{\Omega}$, 
by sharp $2$-transitivity). 

\begin{property}\label{prop:chara}
$J$ satisfies exactly one of the following properties:
\begin{enumerate}
\item every $j\in J$ has a unique fixpoint;
\item all $j\in J$ are fixpoint-free.
\end{enumerate}
In the latter case we say that $G$ has characteristic $2$ and write
$\chara G=2$. In the first case we put $\chara G\neq 2$. 
\end{property}
\begin{proof}
For a detailed proof, we refer to \cite[p.12]{Kerb74}, where the case $\chara G=2$, resp. $\chara G\neq 2$ is referred to as $G$ of type $0$, resp. of type  $1$. (The type refers to the number of fixpoints of an involution in $G$.) The main idea behind the proof is that sharp $2$-transitivity implies that a nontrivial element of $G$ cannot have $2$ (or more) fixed points. Moreover all elements of $J$ are conjugate in $G$, hence they have the same number of fixpoints.
\end{proof}

The reason
for this notation and for using the word \emph{characteristic} will be 
clarified when we establish the correspondence between sharply 2-transitive 
groups and neardomains (see Property~\ref{prop:3.6}).

In the case $\chara G\neq 2$ we write $\nu$ for the unique involution
fixing an arbitrarily chosen base point $\omega_0\in\Omega$. 

The following subset of $G$ plays an important role. We define $A\subseteq G$
as follows 
\begin{equation}\label{adef}
A=\begin{cases}
J\circ\nu & \text{ if }\chara G\neq 2\\
J\cup\{1_{\Omega}\}& \text{ if }\chara G=2
\end{cases}
\end{equation}

\begin{property}\label{prop:3.5}
The triple $(A,\Omega,\omega_0)$ is a regular permutation set on
$\Omega$. 
\end{property}
\begin{proof}
See \cite[(5.3)]{Karz68} or \cite[(3.3)]{Kerb74}.
\end{proof}

The category of sharply $2$-transitive groups will be denoted by
$\mathfrc{s2t-Gp}$. Its \define{objects} are quadruples
$(G,\Omega,\omega_0,\omega_1)$ where $G$ is a permutation group which
operates sharply 2-transitively on the set $\Omega$ (with $\#\Omega\geq
2$), with two different base points
$\omega_0$ and $\omega_1$ of $\Omega$. Morphisms will be defined
after Property~\ref{prop:3.6}.

Let $(G,\Omega, \omega_0, \omega_1)$ be an object in
$\mathfrc{s2t-Gp}$. On $\Omega$ we define an \define{addition} and a
\define{multiplication} as follows.
For $\alpha,\beta\in\Omega$ we define $\alpha +_0\beta$ to be
$a(\beta)$, where $a\in A$ is unique such that $a(\omega_0)=\alpha$.

Since the stabilizer $G_{\omega_0}$ is regular on $\Omega\setminus\{\omega_0\}$ we
can also define $\alpha \cdot_1\beta$ as $g(\beta)$ where $g\in G_{\omega_0}$
is unique such that $g(\omega_1)=\alpha$. We also put $\alpha \cdot_1\beta
=0$ when $\alpha =\omega_0$ or $\beta =\omega_0$.

\begin{property}\label{prop:3.6}
\begin{enumerate}
\item The triple $F=(\Omega, +_0,\cdot_1)$ is a neardomain.
\item $\chara G = 2\Leftrightarrow\chara F = 2$, i.e. $1+1=0$ in $F$
  ($1$ denotes the multiplicative identity of the neardomain $F$).
\end{enumerate}
\end{property}
\begin{proof}
\begin{enumerate}
\item See \cite[(5.2)]{Karz68} or \cite[(6.2)]{Kerb74}.
\item See \cite[(7.10)]{Kiec02}.
\end{enumerate}
\end{proof}

We now define the \define{morphisms} in $\mathfrc{s2t-Gp}$ to be pairs of maps
$(f,\Phi)\colon (G,\Omega,\omega_0,\omega_1)\rightarrow (H,\Sigma,\sigma_0,
\sigma_1)$ where either both $G$ and $H$ have characteristic~$2$ or
both have characteristic different
from~$2$ and where $f\colon G\rightarrow
H$ is a group homomorphism, $\Phi\colon \Omega\rightarrow \Sigma$ an
\define{injective} 
  map with $\Phi(\omega_0)=\sigma_0$ and $\Phi(\omega_1)=\sigma_1$, and
  such that the following diagram commutes. 
\begin{equation}\label{diagram2}
\xymatrix{
G\ar[d]_{f\times\Phi}\times\Omega\ar[r]^-{\lambda}
&{\Omega}\ar[d]^{\Phi} \\
H\times\Sigma\ar[r]_-{\mu}
& {\Sigma}}
\end{equation}
In this diagram $\lambda$ and $\mu$ denote the evaluation maps defined
by $\lambda (g,\omega)= g(\omega)$ and $\mu (h,\sigma)= h(\sigma)$. 

In the case  either $\chara G=2$
and $\chara H\neq 2$ or $\chara G\neq 2$ and $\chara H=2$, we put  $\mathfrc{s2t-Gp}((G,\Omega,\omega_0,\omega_1),
(H,\Sigma,\sigma_0, \sigma_1)) = \emptyset$ (see the second remark just below).
\begin{remark}
\begin{enumerate}
\item The injectivity of $\Phi$ implies the injectivity of $f$ since
$1_\Sigma=f(g)$ for some $g\in G$ implies that $\forall\omega\in\Omega$ we have
$\Phi(g(\omega))=f(g)(\Phi(\omega))=\Phi(\omega)$. Hence
$\forall\omega\in\Omega\colon g(\omega)=\omega$, proving $g=1_\Omega$.

\item The reason for not allowing any morphisms in the case where $G$
  and $H$ have different characteristics is the following. If
  $(f,\Phi)$ is a morphism from
  $(G,\Omega,\omega_0,\omega_1)$ to $(H,\Sigma,\sigma_0, 
\sigma_1)$ with either $\chara G=2$
and $\chara H\neq 2$ or $\chara G\neq 2$ and $\chara H=2$, the map
$\Phi$ between the associated neardomains $(\Omega,+_0,\cdot_1)$ and
$(\Sigma, +_{0'},\cdot_{1'})$ cannot be a morphism of neardomains.
 Indeed,
the first case  $\chara G=2\neq\chara H$ implies (by
Property~\ref{prop:3.6}.2.) that the multiplicative identities $1$ of
$(\Omega,+_0,\cdot_1)$ and $1'$ of $(\Sigma, +_{0'},\cdot_{1'})$
satisfy $1+1=0$ and $1'+1'\neq 0'$, which conflicts with $\Phi\colon (\Omega,
+_0,\cdot_1)\rightarrow (\Sigma, +_{0'},\cdot_{1'})$ being a morphism
of neardomains, as can be seen by $0'\neq 1'+1'=\Phi(1)+'\Phi(1) =
\Phi(1+1) = \Phi(0) = 0'$. Similarly the second case $\chara G\neq
2=\chara H$ leads to the contradiction $0'= 1'+'1'=\Phi(1)+'\Phi(1) =
\Phi(1+1) \neq \Phi(0) = 0'$, where we used the injectivity of
$\Phi$. 

Our elimination of ``bad'' morphisms in $\mathfrc{s2t-Gp}$ enables us to
show (in Property~\ref{prop:3.9}) that the corresponding 
$\Phi$ is a morphism of neardomains.
\end{enumerate}
\end{remark}

\begin{lemma}
For a morphism $(f,\Phi)\colon (G,\Omega,\omega_0,\omega_1)\rightarrow
(H,\Sigma,\sigma_0,\sigma_1)$ the following hold.
\begin{enumerate}
\item $f(J)\subseteq K$ (where $K$ denotes the set of involutions in
  $H$);
\item $f(A)\subseteq B$ where $A\subseteq G$ is defined in
  (\ref{adef}) above Property~\ref{prop:3.5} and $B\subseteq H$ is
  defined by  
$$B=\begin{cases}
K\circ f(\nu) & \text{ if }\chara H\neq 2\\
K\cup\{1_{\Sigma}\}& \text{ if }\chara H=2
\end{cases}$$
($f(\nu)$ is the unique element of $K$ fixing the base point
$\sigma_0=\Phi(\omega_0)$)
\end{enumerate}
\end{lemma}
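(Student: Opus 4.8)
The plan is to deduce part~1 immediately from the injectivity of $f$, and then read off part~2 by splitting on the characteristic and feeding in part~1. First recall that, as noted in the remark just before the lemma, the injectivity of $\Phi$ forces $f$ to be injective. Now let $j\in J$. Since $f$ is a group homomorphism we get $f(j)^{2}=f(j^{2})=f(1_{\Omega})=1_{\Sigma}$, so $f(j)$ is either $1_{\Sigma}$ or an involution of $H$; and $f(j)\neq 1_{\Sigma}$ because $j\neq 1_{\Omega}$ and $f$ is injective. Hence $f(j)\in K$, which proves $f(J)\subseteq K$.

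For part~2 I would use that the very existence of a morphism forces $\chara G=\chara H$, so only two cases occur. If $\chara G=\chara H=2$, then $A=J\cup\{1_{\Omega}\}$ and $B=K\cup\{1_{\Sigma}\}$; since $f(1_{\Omega})=1_{\Sigma}$ and $f(J)\subseteq K$ by part~1, we obtain $f(A)=f(J)\cup\{1_{\Sigma}\}\subseteq B$. If $\chara G=\chara H\neq 2$, then $A=J\circ\nu$ and $B=K\circ f(\nu)$; since $f$ is a homomorphism, $f(A)=f(J\circ\nu)=f(J)\circ f(\nu)\subseteq K\circ f(\nu)=B$, again by part~1.

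Along the way I would also verify the parenthetical assertion in the statement, that $f(\nu)$ is the unique involution of $H$ fixing $\sigma_{0}$: since $\chara G\neq 2$ we have $\nu\in J$, so $f(\nu)\in K$ by part~1, and commutativity of diagram~(\ref{diagram2}) together with $\nu(\omega_{0})=\omega_{0}$ and $\Phi(\omega_{0})=\sigma_{0}$ gives $f(\nu)(\sigma_{0})=f(\nu)(\Phi(\omega_{0}))=\Phi(\nu(\omega_{0}))=\Phi(\omega_{0})=\sigma_{0}$; uniqueness among the involutions of $H$ is then the defining property of the distinguished involution attached to the base point $\sigma_{0}$, as fixed in Section~\ref{sec:3.2}. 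I do not expect any real obstacle here: the only genuine input is the injectivity of $f$, which is exactly what excludes the degenerate case $f(j)=1_{\Sigma}$ in part~1, and everything else is bookkeeping with the two descriptions of $A$ and $B$.
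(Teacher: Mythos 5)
Your proof is correct and follows essentially the same route as the paper: part~1 via injectivity of $f$ (inherited from $\Phi$) and $f(j)^2=1_\Sigma$, and part~2 by splitting on the common characteristic and applying the homomorphism property of $f$. Your extra verification that $f(\nu)$ is indeed the unique involution of $H$ fixing $\sigma_0$, via commutativity of diagram~(\ref{diagram2}), is a welcome detail the paper leaves implicit.
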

\begin{proof}
\begin{enumerate}
\item Let $g\in J$, then $f(g)$ satisfies
  $(f(g))^2=f(g^2)=f(1_{\Omega})=1_{\Sigma}\neq f(g)$, since
  $f(1_{\Omega})=f(g)$ would imply (using the injectivity of $f$) that
  $g=1_{\Omega}$, contradicting the fact that $g\in J$. Hence
  $f(J)\subseteq K$.
\item By the previous remark 2., the existence of the morphism
  $(f,\Phi)$ implies that either $\chara G = \chara H\neq 2$ or
  $\chara G=\chara H=2$. In the first case it follows that
  $f(A)=f(J\circ \nu)=f(J)\circ f(\nu)\subseteq K\circ f(\nu)=B$. In
  the second case we have $f(A)=f(J\cup\{1_{\Omega}\}) =
  f(J)\cup\{f(1_{\Omega})\}\subseteq K\cup\{1_{\Sigma}\}=B$.
\end{enumerate}
\end{proof}

\begin{property}\label{prop:3.9}
For a morphism $(f,\Phi)\colon (G,\Omega,\omega_0,\omega_1)\rightarrow
(H,\Sigma,\sigma_0,\sigma_1)$ the map 
$\Phi\colon (\Omega,+_0,\cdot_1)\rightarrow
  (\Sigma,+_{0'},\cdot_{1'})$ is a morphism of neardomains.
\end{property}
\begin{proof}
Consider the diagram~(\ref{diagram2}) above involving the morphism
$(f,\Phi)$ from $(G,\Omega,\omega_0,\omega_1)$ to
$(H,\Sigma,\sigma_0,\sigma_1)$. For $\alpha,\beta\in\Omega$, $\alpha
+_0\beta = a(\beta)\in\Omega$ where $a\in A\subseteq G$ is unique such
that $a(\omega_0)=\alpha$. Now $\Phi(\alpha +_0\beta) =
\Phi(a(\beta))=\Phi(\lambda(a, \beta))=(\mu\circ(f\times
\Phi))(a,\beta) = \mu(f(a),\Phi(\beta))=f(a)(\Phi(\beta))$. On the
other hand, $\Phi(\alpha)+_{0'}\Phi(\beta)=b(\Phi(\beta))$ where $b\in
B\subseteq H$ is unique such that $b(\sigma_0)=\Phi(\alpha)$. 

Since $f(A)\subseteq B$ we also have $f(a)\in B$ and moreover
$(f(a))(\sigma_0)=\mu(f(a),\Phi(\omega_0))=\Phi(\lambda(a,\omega_0))=\Phi(a(\omega_0))=\Phi(\alpha)$. Since
$B$ acts regularly on $\Sigma$ it follows that $f(a)=b$. This implies
that $\Phi(\alpha
+_0\beta)=f(a)(\Phi(\beta))=b(\Phi(\beta))=\Phi(\alpha)+_{0'}\Phi(\beta)$. 

We recall that, for $\alpha,\beta\in\Omega$, 
$$
\alpha\cdot_1\beta =\left\{
\begin{array}{ll}
g(\beta) & \text{when }\alpha\in\Omega\setminus\{\omega_0\}\text{ with
} g\in G_{\omega_0}\text{ unique such that } g(\omega_1)=\alpha\\
\omega_0 & \text{when } \alpha=\omega_0 \text{ or } \beta=\omega_0
\end{array}\right.
$$
When $\alpha=\omega_0$ we have immediately
$\Phi(\alpha)\cdot_{1'}\Phi(\beta)=\sigma_0\cdot_{1'}\Phi(\beta)=\sigma_0=\Phi(\omega_0)=\Phi(\omega_0\cdot_1\beta)$.

When $\alpha\neq\omega_0$, we have
$\Phi(\alpha\cdot_1\beta)=\Phi(g(\beta))=\Phi(\lambda(g,\beta)) =
(\mu\circ(f\times\Phi))(g,\beta)=\mu(f(g),\Phi(\beta))$ (with $g\in G_{\omega_0}$
unique such that $g(\omega_1)=\alpha$). On the other hand,
$\Phi(\alpha)\cdot_{1'}\Phi(\beta)$ $=h(\Phi(\beta))$ with $h\in
H_{\sigma_0}$ unique such that $h(\sigma_1)=\Phi(\alpha)$. Note that,
by the injectivity of $\Phi$ we have
$\Phi(\alpha)\neq\Phi(\omega_0)=\sigma_0$. We must have $h=f(g)$ since
$f(g)(\sigma_1)=\mu(f(g),\Phi(\omega_1))=(\mu\circ(f\times\Phi))(g,\omega_1)=(\Phi\circ\lambda)(g,\omega_1)=\Phi(g(\omega_1))=\Phi(\alpha)=h(\sigma_1)$
and $f(g),h\in H_{\sigma_0}$. This implies
$\Phi(\alpha\cdot_1\beta)=\Phi(g(\beta))=\Phi(\lambda(g,\beta))=(\mu\circ(f\times
\Phi))(g,\beta)=\mu(f(g),\Phi(\beta))=f(g)(\Phi(\beta))=h(\Phi(\beta))=\Phi(\alpha)\cdot_{1'}\Phi(\beta)$.
Hence $\Phi$ is a neardomain homomorphism.
\end{proof}

For any neardomain $(F,+,\cdot)$ we can construct the object
$(T_2(F),F,0,1)$ in $\mathfrc{s2t-Gp}$.
It is clear that the stabilizer
of $0$ in $T_2(F)$ consists of the elements
$\tau_{0,\delta}$ with
$\delta\in F\setminus\{0\}$.

\begin{lemma}\label{lem:Achara}
When $G=T_2(F)$ we have $A=\{\tau_{\gamma,1}\mid
\gamma\in F\}$.
\end{lemma}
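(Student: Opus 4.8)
The plan is to identify the set $J$ of involutions in $G=T_2(F)$ explicitly, and then apply the defining formula \eqref{adef} for $A$. First I would decide the characteristic of $G=T_2(F)$: by Property~\ref{prop:3.6}.2 we have $\chara G = 2$ exactly when $1+1=0$ in $F$, so I will treat the two cases of \eqref{adef} uniformly by first finding all involutions. An element $\tau_{a,b}\colon x\mapsto a+bx$ squares to $\tau_{a,b}\circ\tau_{a,b}\colon x\mapsto a+b(a+bx)$, and using the neardomain axiom \ref{rule6} this equals $(a+ba)+d_{a,ba}\cdot b\cdot x$ (with the convention $a+b0=a$, $b0=0$ from axiom~\ref{cond5} and the note after the axioms). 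So $\tau_{a,b}^2 = \tau_{a+ba,\,d_{a,ba}b}$, and this is the identity $\tau_{0,1}$ iff $a+ba=0$ and $d_{a,ba}b=1$.

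Next I would extract from these two equations a clean description of $J$. The key observation is that the involutions are precisely the maps fixing a pair of distinct points swapped, so by sharp $2$-transitivity (Theorem~\ref{trm:3.3}) an involution $\tau_{a,b}$ with a fixpoint $\gamma$ satisfies $a+b\gamma=\gamma$; more usefully, I expect to show directly that $\tau_{a,b}\in J$ forces (together with $\chara G\neq 2$, where $\nu$ fixes $\omega_0=0$) that $b$ is determined, and in fact $\nu=\tau_{0,-1}$ when $\chara G\neq 2$ (since the unique involution fixing $0$ sends $x\mapsto bx$ with $b^2=1$, $b\neq 1$, forcing $b=-1$; here $-1$ is the multiplicative element with $(-1)x=-x$, which exists and is an involution in the group $(F\setminus\{0\},\cdot)$ precisely when $\chara F\neq 2$). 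Then $J\circ\nu$ becomes $\{\tau_{a,b}\circ\tau_{0,-1}\mid \tau_{a,b}\in J\} = \{\tau_{a,-b}\mid \tau_{a,b}\in J\}$, and I must show this equals $\{\tau_{\gamma,1}\mid\gamma\in F\setminus\{0\}\}$, i.e. that $\tau_{a,b}\in J$ with $a\neq 0$ forces $b=-1$. In the characteristic $2$ case, $A=J\cup\{1_\Omega\}$, and I must show $J=\{\tau_{\gamma,1}\mid\gamma\in F\setminus\{0\}\}$ directly (note $\tau_{0,1}=1_\Omega$ accounts for $\gamma=0$); here the equations $a+ba=0$ with $1+1=0$, together with $d_{a,ba}b=1$, should pin down $b=1$ for every $a\in F\setminus\{0\}$.

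The main obstacle will be the second step: proving that an involution $\tau_{a,b}$ with $a\neq 0$ necessarily has $b=1$ (in characteristic $2$) or $b=-1$ (otherwise). The equations $a+ba=0$ and $d_{a,ba}b=1$ involve the poorly-controlled structure constants $d_{x,y}$ from axiom~\ref{rule6}, so a brute-force manipulation is unappealing. Instead I would argue geometrically via the regular permutation set $(A,\Omega,\omega_0)=(A,F,0)$ of Property~\ref{prop:3.5}: $A$ acts regularly on $F$, so $\#A = \#F$, and $A$ consists of certain involutions (times $\nu$). Since $\{\tau_{\gamma,1}\mid\gamma\in F\}$ also has cardinality $\#F$, acts regularly on $F$ (it is exactly the set of additive translations $x\mapsto\gamma+x$, and regularity is the left/right loop property of $(F,+)$), and contains $1_\Omega=\tau_{0,1}$, it suffices to show one inclusion — that every $\tau_{\gamma,1}$ lies in $A$ — and then equality follows from the regularity-forced cardinality count, avoiding any delicate computation with the $d_{a,b}$. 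To see $\tau_{\gamma,1}\in A$ for $\gamma\neq 0$: in characteristic $2$ one checks $\tau_{\gamma,1}^2=\tau_{\gamma+\gamma,\,d_{\gamma,\gamma}\cdot 1}=\tau_{0,d_{\gamma,\gamma}}$, and since the left-hand side must then be a multiplication fixing $0$ that is forced to be an involution, comparison shows $d_{\gamma,\gamma}=1$, so $\tau_{\gamma,1}\in J\subseteq A$; in the case $\chara G\neq 2$ one shows $\tau_{\gamma,1}\circ\nu = \tau_{\gamma,1}\circ\tau_{0,-1} = \tau_{\gamma,-1}$ is an involution (its square is $\tau_{\gamma+(-\gamma),\,d_{\gamma,-\gamma}\cdot(-1)^2}=\tau_{0,d_{\gamma,-\gamma}}$, again forced to be $1_\Omega$), hence $\tau_{\gamma,-1}\in J$ and so $\tau_{\gamma,1}=\tau_{\gamma,-1}\circ\nu\in J\circ\nu=A$. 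This gives the inclusion $\{\tau_{\gamma,1}\mid\gamma\in F\}\subseteq A$, and the cardinality/regularity argument upgrades it to the claimed equality.
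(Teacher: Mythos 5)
Your overall architecture is reasonable, and in fact the paper gives no proof at all here (it simply cites Kerby (6.5)), so a self-contained argument is worth having; but as written your plan leaves unproved exactly the facts that carry the lemma. The main gap is in the case $\chara G\neq 2$: everything there rests on your assertion that there is an element $-1$ with $(-1)x=-x$ for all $x$, equivalently that $\nu=\tau_{0,-1}$ and that $\tau_{\gamma,1}\circ\nu=\tau_{\gamma,-1}$ interchanges $0$ and $\gamma$. A neardomain has only the left distributive law (axiom~\ref{cond5}), which yields $x\cdot(-1)=-x$ but says nothing about $(-1)\cdot x$, so this ``existence'' cannot simply be asserted: it is the nontrivial content of the lemma. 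It is true and provable with the tools of Section~\ref{sec:3.2}, e.g.: from $a(-b)=-(ab)$ (left distributivity) one gets $(-1)^2=1$, and $-1\neq 1$ since $1+1\neq 0$, so $\tau_{0,-1}$ is an involution fixing $0$ and hence equals $\nu$ by the uniqueness stated there; next, the unique $g\in G$ interchanging $0$ and $\gamma$ is an involution of the form $\tau_{\gamma,b}$ with $b\gamma=-\gamma$, and its square $\tau_{0,\,d_{\gamma,-\gamma}b^2}$ being trivial forces $b^2=1$ (using $d_{\gamma,-\gamma}=1$, see below), while $b\neq 1$ because $\gamma+\gamma=\gamma(1+1)\neq 0$; then $\tau_{0,b}$ is an involution fixing $0$, so $b=-1$ by uniqueness of $\nu$, and $(-1)\gamma=b\gamma=-\gamma$ follows. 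Without some such argument the characteristic $\neq 2$ half of your proof does not get off the ground.

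Two further repairs are needed. Your justification that the relevant squares are trivial (``forced to be an involution'') is not a proof; what you actually need is that $d_{a,b}=1$ whenever $a+b=0$, which holds: evaluate axiom~\ref{rule6} at $x=a$ and use axiom~\ref{cond2} to get $d_{a,b}\cdot a=a$, hence $d_{a,b}=1$. (Alternatively, avoid the structure constants altogether: $\tau_{\gamma,1}$ in characteristic $2$, resp.\ $\tau_{\gamma,-1}$ in characteristic $\neq 2$, interchanges $0$ and $\gamma$, so its square fixes two distinct points and equals $1_\Omega$ by sharp $2$-transitivity, Theorem~\ref{trm:3.3} --- the same trick the paper uses in Section~\ref{sec:3.2} to produce involutions.) Also, your general formula should read $\tau_{a,b}^2=\tau_{a+ba,\,d_{a,ba}b^2}$; the factor $b^2$ is missing, although your later instances are computed correctly. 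Finally, the closing ``cardinality count'' is invalid when $F$ is infinite (and any neardomain that is not a nearfield is necessarily infinite): an inclusion between sets of the same infinite cardinality need not be an equality. The fix is the uniqueness half of regularity (Property~\ref{prop:3.5}): having shown $\{\tau_{\gamma,1}\mid\gamma\in F\}\subseteq A$, take any $a\in A$ and put $\gamma=a(0)$; then $a$ and $\tau_{\gamma,1}$ both lie in $A$ and both send $0$ to $\gamma$, so $a=\tau_{\gamma,1}$. With these three points filled in, your plan does yield a correct proof.
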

\begin{proof}
See \cite[(6.5)]{Kerb74}.
\end{proof}

\section{Equivalence of the categories $\mathfrc{s2t-Gp}$ and
  $\mathfrc{n-Dom}$} 
\begin{theorem}\label{trm:3.8}
The functor
$$
\begin{array}{rccc}
K\colon & \mathfrc{s2t-Gp}&\longrightarrow & \mathfrc{n-Dom}\\[.3cm]
& (G,\Omega, \omega_0,\omega_1)& \longmapsto & (\Omega,+_0,\cdot_1)\\
& | & & |\\
&(f,\Phi) & \mapsto & \Phi\\
& \downarrow & & \downarrow\\
& (H,\Sigma,\sigma_0,\sigma_1) & \longmapsto & (\Sigma, +_0,\cdot_1)
\end{array}
$$
is an equivalence of categories. Here $f\colon G\rightarrow H$,
$\Phi\colon \Omega\rightarrow\Sigma$ with $\Phi(\omega_0)=\sigma_0$
and $\Phi(\omega_1)=\sigma_1$ are defined as in
section~\ref{sec:3.2}. The neardomain operations
$+_0\colon\Omega\times\Omega\rightarrow\Omega$ and
$\cdot_1\colon\Omega\setminus\{\omega_0\}\times\Omega\setminus\{\omega_0\}\rightarrow
\Omega\setminus\{\omega_0\}$ are defined as in the previous section.
\end{theorem}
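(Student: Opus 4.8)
The plan is to verify that $K$ is a well-defined functor and then invoke the same criterion used for $F$ in Section~2, namely \cite[prop. 3.4.3(4)]{Borc94}: it suffices to show that $K$ is full, faithful, and essentially surjective on objects. Functoriality is routine: on objects $K$ sends $(G,\Omega,\omega_0,\omega_1)$ to the neardomain $(\Omega,+_0,\cdot_1)$, which is indeed a neardomain by Property~\ref{prop:3.6}; on morphisms $K$ sends $(f,\Phi)$ to $\Phi$, which is a neardomain morphism by Property~\ref{prop:3.9}; and composition and identities are clearly preserved since $K$ just forgets $f$ and keeps $\Phi$.

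For faithfulness I would argue exactly as in the $\mathfrc{Loop}$/$\mathfrc{rps}$ case. Suppose $(f,\Phi)$ and $(f',\Phi')$ are morphisms $(G,\Omega,\omega_0,\omega_1)\to(H,\Sigma,\sigma_0,\sigma_1)$ with $K(f,\Phi)=K(f',\Phi')$, i.e.\ $\Phi=\Phi'$. From the commuting diagram~(\ref{diagram2}) we get $f(g)(\Phi(\alpha))=\Phi(g(\alpha))=\Phi'(g(\alpha))=f'(g)(\Phi(\alpha))$ for all $g\in G$ and all $\alpha\in\Omega$. Since $\Phi$ is injective, $\Phi(\Omega)$ has at least two points, and since $H$ acts sharply $2$-transitively (hence faithfully), two elements of $H$ agreeing on all of $\Phi(\Omega)$ — in particular on $\sigma_0=\Phi(\omega_0)$ and $\sigma_1=\Phi(\omega_1)$ together with every other image point — must be equal; actually agreement on any single orbit representative plus one more point suffices, so $f(g)=f'(g)$ for all $g$, giving $f=f'$. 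Hence $K$ is faithful.

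For fullness, given a neardomain morphism $\Phi\colon(\Omega,+_0,\cdot_1)\to(\Sigma,+_{0'},\cdot_{1'})$, I must produce a group homomorphism $f\colon G\to H$ making $(f,\Phi)$ a morphism in $\mathfrc{s2t-Gp}$. Since neardomain morphisms are injective (by the Property preceding Theorem~\ref{trm:3.3}), $\Phi$ is injective, and $\Phi$ sends the neutral elements $\omega_0\mapsto\sigma_0$, $\omega_1\mapsto\sigma_1$. By Theorem~\ref{trm:3.3} and Lemma~\ref{lem:Achara} I can identify $G$ with $T_2(\Omega)=\{\tau_{a,b}\}$ acting on $\Omega$ via $x\mapsto a\cdot_1(?)\dots$ — more precisely, using the reconstruction already present in the excerpt, every $g\in G$ is determined by the pair $(g(\omega_0),g(\omega_1))$, so I can define $f(g)\in H$ to be the unique element with $f(g)(\sigma_0)=\Phi(g(\omega_0))$ and $f(g)(\sigma_1)=\Phi(g(\omega_1))$; this is legitimate because $\Phi$ injective forces $\Phi(g(\omega_0))\neq\Phi(g(\omega_1))$ and $H$ is sharply $2$-transitive. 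One then checks $f$ is a homomorphism and that diagram~(\ref{diagram2}) commutes, i.e.\ $f(g)(\Phi(\alpha))=\Phi(g(\alpha))$ for \emph{all} $\alpha$, not just $\omega_0,\omega_1$. The natural route is to express the $G$-action on $\Omega$ in terms of $+_0$ and $\cdot_1$ (every $g$ is a composite of a "translation'' $a\mapsto c+_0 a$ and a "multiplication'' $a\mapsto \delta\cdot_1 a$, by the $T_2$ description together with condition~\ref{rule6} on the $d_{a,b}$), and then use that $\Phi$ respects $+_0$ and $\cdot_1$ to push the action through $\Phi$. This is the step I expect to be the main obstacle: the subtlety is that the $G$-action is not literally "apply a translation then a multiplication'' on the nose because of the cocycle terms $d_{a,b}$ in the loop $(\Omega,+_0)$, so one must be careful that the element $f(g)$ defined by its values at $\sigma_0,\sigma_1$ really agrees with $\Phi\circ g\circ\Phi^{-1}$ everywhere — this is where Property~\ref{prop:3.9}'s style of computation, run in reverse, and the regularity of the $A$-action (and of the stabilizer action) do the work.

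Finally, essential surjectivity: given any neardomain $(F,+,\cdot)$, form $(T_2(F),F,0,1)$, which lies in $\mathfrc{s2t-Gp}$ by Theorem~\ref{trm:3.3}. I claim $K(T_2(F),F,0,1)=(F,+,\cdot)$ on the nose. For addition: by Lemma~\ref{lem:Achara}, $A=\{\tau_{\gamma,1}\colon x\mapsto\gamma+x\mid\gamma\in F\}$, so for $\alpha,\beta\in F$ the unique $a\in A$ with $a(0)=\alpha$ is $\tau_{\alpha,1}$ (since $\gamma+0=\gamma$), whence $\alpha+_0\beta=a(\beta)=\alpha+\beta$. For multiplication: the stabilizer of $0$ in $T_2(F)$ is $\{\tau_{0,\delta}\colon x\mapsto\delta\cdot x\mid\delta\in F\setminus\{0\}\}$, so for $\alpha\neq 0$ the unique such $g$ with $g(1)=\alpha$ is $\tau_{0,\alpha}$ (as $\delta\cdot 1=\delta$), giving $\alpha\cdot_1\beta=g(\beta)=\alpha\cdot\beta$; and both $\cdot_1$ and $\cdot$ send anything with a $0$ factor to $0$. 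Hence $K$ is (even strictly) surjective on objects, and by \cite[prop. 3.4.3(4)]{Borc94} $K$ is an equivalence of categories.
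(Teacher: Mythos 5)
Your route (full, faithful, and strictly surjective on objects, via \cite[prop.~3.4.3(4)]{Borc94}) is legitimate; the paper itself notes this would be the shortest proof but deliberately takes another path, constructing an explicit quasi-inverse $L\colon \mathfrc{n-Dom}\rightarrow\mathfrc{s2t-Gp}$, $F\mapsto (T_2(F),F,0,1)$, $\Phi\mapsto (f_\Phi,\Phi)$ with $f_\Phi(\tau_{a,b})=\tau_{\Phi(a),\Phi(b)}$, together with $K\circ L=1_{\mathfrc{n-Dom}}$ and a natural isomorphism $\gamma\colon L\circ K\Rightarrow 1_{\mathfrc{s2t-Gp}}$. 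Your faithfulness argument is correct (two elements of $H$ agreeing at the distinct points $\sigma_0=\Phi(\omega_0)$ and $\sigma_1=\Phi(\omega_1)$ coincide by sharp $2$-transitivity), and your verification of strict surjectivity on objects is exactly the paper's computation that $K\circ L$ is the identity.

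The genuine gap is fullness, and you flag it yourself: after defining $f(g)$ as the unique element of $H$ with $f(g)(\sigma_0)=\Phi(g(\omega_0))$ and $f(g)(\sigma_1)=\Phi(g(\omega_1))$, you never verify that diagram~(\ref{diagram2}) commutes for all $\alpha$ nor that $f$ is a homomorphism; you only predict that ``Property~\ref{prop:3.9} run in reverse'' will do the work. That verification \emph{is} the content of fullness and must be carried out. It does go through, and more easily than your worry about the $d_{a,b}$ suggests: write $g=a\circ g_0$ with $a\in A$ the unique element satisfying $a(\omega_0)=g(\omega_0)$ and $g_0=a^{-1}\circ g\in G_{\omega_0}$; then $g(x)=\gamma+_0(\delta\cdot_1 x)$ with $\gamma=g(\omega_0)$, $\delta=g_0(\omega_1)$ — no $d_{a,b}$ appears in this formula (the cocycle terms only arise when composing two such maps, which is precisely where the paper needs $\Phi(d_{a,bk})=d'_{\Phi(a),\Phi(b)\Phi(k)}$ to show $f_\Phi$ is a homomorphism). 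Since $\Phi$ preserves both operations, $\Phi(g(x))=\Phi(\gamma)+_{0'}\bigl(\Phi(\delta)\cdot_{1'}\Phi(x)\bigr)$, and applying the same decomposition in $H$ to $f(g)$ (whose values at $\sigma_0,\sigma_1$ are $\Phi(\gamma)$ and $\Phi(\gamma)+_{0'}\Phi(\delta)\cdot_{1'}\sigma_1$) shows $f(g)(y)=\Phi(\gamma)+_{0'}\Phi(\delta)\cdot_{1'}y$, hence $f(g)\circ\Phi=\Phi\circ g$ on all of $\Omega$; the homomorphism property then follows since $f(g\circ g')$ and $f(g)\circ f(g')$ agree on $\Phi(\Omega)\supseteq\{\sigma_0,\sigma_1\}$. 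You should also say a word about characteristics: fullness is not endangered by pairs of objects with $\chara G\neq\chara H$ (where the hom-set is empty) because, by the paper's Remark, no neardomain morphism exists between the corresponding images of $K$, so there is nothing to lift. Finally, note what the paper's route buys that yours does not: the explicit functor $L$ is reused verbatim in Theorem~\ref{trm3.10} to restrict the equivalence to nearfields, whereas with your argument that construction would still have to be made there.
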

\begin{proof}
The shortest proof should consist in proving that $K$ is full and
faithful and essentially surjective on objects (\cite[prop.3.4.3
(4)]{Borc94}). However we prefer to show that there exists a functor
$L\colon \mathfrc{n-Dom}\rightarrow \mathfrc{s2t-Gp}$, which is of interest in
its own right, and two natural isomorphisms $1_{\mathfrc{n-Dom}}\cong
K\circ L$ and $L\circ K\cong 1_{\mathfrc{s2t-Gp}}$ (\cite[prop. 3.4.3
(3)]{Borc94}). The proof of functoriality of $K$ is left as an
exercise for the reader. The definition of $L\colon
\mathfrc{n-Dom}\rightarrow \mathfrc{s2t-Gp}$ is as follows. A
neardomain $(F,+,.)$ is sent to $L(F)=(T_2(F),F,0,1)$
where $(T_2(F),\circ)$, as defined in Theorem~\ref{trm:3.3}, is a
sharply $2$-transitive group acting on $F$ and $0$, $1$
are the identities of the loop $(F,+)$ and the group $(F\setminus\{0\},.)$
respectively. A morphism $\Phi\colon (F,+,.)\rightarrow (F',+',.')$ of
neardomains is sent to $L(\Phi)=(f_{\Phi},\Phi)\colon
(T_2(F),F,0,1) \rightarrow
(T_2(F'),F',0',1')$ where $f_{\Phi}\colon
(T_2(F),\circ)\rightarrow (T_2(F'),\circ)$ maps $\tau_{a,b}$ to
$\tau_{\Phi(a),\Phi(b)}$. Since $\Phi(1)=1'$ it immediately follows,
using Lemma~\ref{lem:Achara}, that $f_{\Phi}(A) =
\{f_{\Phi}(\tau_{a,1})\mid a\in F\} = \{\tau_{\Phi(a),1'}\mid a\in F\}
 \subseteq \{\tau_{a',1'}\mid a'\in F'\} = A'$. Moreover $f_{\Phi}$
is a group homomorphism since, on the
one hand, for $a,k\in F$ and $b,l\in F\setminus\{0\}$
we have
$f_{\Phi}(\tau_{a,b}\circ\tau_{k,l})=f_{\Phi}(\tau_{a+bk,d_{a,bk}bl}) =
\tau_{\Phi(a+bk),\Phi(d_{a,bk}bl)}=\tau_{\Phi(a)+\Phi(b)\Phi(k),\Phi(d_{a,bk})\Phi(b)\Phi(l)}$
while, on the other hand, $f_{\Phi}(\tau_{a,b})\circ
f_{\Phi}(\tau_{k,l})=\tau_{\Phi(a),\Phi(b)}\circ\tau_{\Phi(k),\Phi(l)}=\tau_{\Phi(a)+\Phi(b)\Phi(k),d_{\Phi(a),\Phi(b)\Phi(k)}'\Phi(b)\Phi(l)}$. 
Finally the missing link $\Phi(d_{a,bk})=d_{\Phi(a),\Phi(b)\Phi(k)}'$
is provided by applying $\Phi$ to the identity
$a+(bk+x)=(a+bk)+d_{a,bk}x$, which follows from axiom~\ref{rule6} for $F$, and
expanding $\Phi(a)+(\Phi(b)\Phi(k)+\Phi(x))$, again using rule~\ref{rule6} for
$F'$. 

Then it follows that $L(\Phi)$ is a morphism in $\mathfrc{s2t-Gp}$ by
easily verifying that for all $a\in F$,
$b\in F\setminus\{0\}$, $\omega\in F$ we have
$\Phi(\tau_{a,b}(\omega))=(f_{\Phi}(\tau_{a,b}))(\Phi(\omega))$.

The functoriality of $L$, i.e. $L(1_F)=1_F$ for every neardomain $F$
and $L(\Phi'\circ\Phi)=L(\Phi')\circ L(\Phi)$ for every composable
pair $\Phi\colon F\rightarrow F'$, $\Phi'\colon F'\rightarrow F''$ of
morphisms of neardomains, is easily verified.

In order to prove $K\circ L=1_{\mathfrc{n-Dom}}$ it suffices to check that $+_0$,
$\cdot_1$ (resp. $+_0'$, $\cdot_1'$) coincide with $+$, $\cdot$
(resp. $+'$, $\cdot'$).

We first look at the multiplication. Take any $\alpha$, $\beta$
in $F$. By definition $\alpha\cdot_1\beta$ equals
$\tau_{0,\alpha}(\beta)$ since $T_2(F)_{0}=\{\tau_{0,\delta}\mid
\delta\in F\setminus\{0\}\}$ and 
$\tau_{0,\delta}(1) = \alpha\Leftrightarrow
\delta=\alpha$. Hence we get $\alpha\cdot_1\beta=\alpha\cdot\beta$.

For the addition we consider $\alpha,\beta\in F$. By
Lemma~\ref{lem:Achara}, the unique element in $A$ mapping $0$
to $\alpha$ must be equal to $\tau_{\alpha,1}$. Therefore we
get $\alpha +_0\beta = \tau_{\alpha,1}(\beta) = \alpha +\beta$.

Secondly, we have to find a natural isomorphism
$\gamma\colon L\circ K\Rightarrow 1_{\mathfrc{s2t-Gp}}$. We define each component
$\gamma_{(G,\Omega,\omega_0,\omega_1)}\colon (L\circ
K)(G,\Omega,\omega_0,\omega_1) =
L(\Omega,+_0,\cdot_1)=(T_2(\Omega),\Omega,\omega_0,\omega_1)\rightarrow
(G,\Omega,\omega_0,\omega_1)$ as follows.

For each $\tau_{\alpha,\beta}\in T_2(\Omega)$ there exists, by the
sharp $2$-transitivity of $G$ on $(\Omega,+_0,\cdot_1)$ a unique
element $g\in G$, denoted by $g_{\alpha,\beta}$ such that
$g_{\alpha,\beta}(\omega_0)=\alpha=\tau_{\alpha,\beta}(\omega_0)$ and
$g_{\alpha,\beta}(\omega_1) =
\alpha+_0\beta\cdot_1\omega_1=\tau_{\alpha,\beta}(\omega_1)$. This
correspondence defines a bijective map $k\colon T_2(\Omega)\rightarrow
G\colon \tau_{\alpha,\beta}\mapsto g_{\alpha,\beta}$ and one verifies
  easily that $k\colon (T_2(\Omega),\circ)\rightarrow (G,\circ)$ is a
  group homomorphism (using also the sharp $2$-transitivity of
  $T_2(\Omega)$ on $\Omega$). Finally we define 
$\gamma_{(G,\Omega,\omega_0,\omega_1)}\colon (T_2(\Omega),\Omega,\omega_0,\omega_1)\rightarrow
(G,\Omega,\omega_0,\omega_1)$ by
$\gamma_{(G,\Omega,\omega_0,\omega_1)}(\tau_{\alpha,\beta},\omega)=(k(\tau_{\alpha,\beta}),\omega)=(g_{\alpha,\beta},\omega)$,
which is an isomorphism in $\mathfrc{s2t-Gp}$. The naturality of
$\gamma$ amounts to the commutativity of each square 
\begin{displaymath}
\xymatrix{
(T_2(\Omega),\Omega,\omega_0,\omega_1)\ar[d]_{(f_{\Phi},\Phi)}\ar[rrr]^-{\gamma_{(G,\Omega,\omega_0,\omega_1)}} 
&&&(G,\Omega,\omega_0,\omega_1)\ar[d]^{(f,\Phi)}\\
(T_2(\Sigma),\Sigma,\sigma_0,\sigma_1)\ar[rrr]^-{\gamma_{(H,\Sigma,\sigma_0,\sigma_1)}} 
&&&(H,\Sigma,\sigma_0,\sigma_1)}
\end{displaymath}
with $f\colon G\rightarrow H$ and $\Phi\colon \Omega\rightarrow\Sigma$
as in~\ref{sec:3.2} and $f_{\Phi}\colon T_2(\Omega)\rightarrow
T_2(\Sigma)\colon
\tau_{\alpha,\beta}\mapsto\tau_{\Phi(\alpha),\Phi(\beta)}$.

On the one hand we have, for each $(\tau_{\alpha,\beta},\omega)\in
T_2(\Omega)\times \Omega$, 
$((f,\Phi)\circ
\gamma_{(G,\Omega,\omega_0,\omega_1)})(\tau_{\alpha,\beta},\omega) =
(f,\Phi)(k(\tau_{\alpha ,\beta}),\omega) =
(f,\Phi)(g_{\alpha,\beta},\omega) =
(f(g_{\alpha,\beta}),\Phi(\omega))\in H\times\Sigma$. 

On the other
hand, $(\gamma_{(H,\Sigma,\sigma_0,\sigma_1)}\circ
(f_{\Phi},\Phi))(\tau_{\alpha,\beta},\omega)=$ $\gamma_{(H,\Sigma,\sigma_0,\sigma_1)}(f_{\Phi}(\tau_{\alpha,\beta}),\Phi(\omega))
=$\\
$\gamma_{(H,\Sigma,\sigma_0,\sigma_1)}(\tau_{\Phi(\alpha),\Phi(\beta)},\Phi(\omega)) 
= (g_{\Phi(\alpha),\Phi(\beta)},\Phi(\omega))\in H\times \Sigma$. 

Finally $f(g_{\alpha,\beta})=g_{\Phi(\alpha),\Phi(\beta)}$ since
$(f(g_{\alpha,\beta}))(\sigma_0)=(f(g_{\alpha,\beta}))(\Phi(\omega_0))=\Phi(\alpha)=g_{\Phi(\alpha),\Phi(\beta)}(\sigma_0)$
and, similarly, $(f(g_{\alpha,\beta}))(\sigma_1)=(\mu\circ
(f\times\Phi))(g_{\alpha,\beta},\omega_1) =
\Phi(g_{\alpha,\beta}(\omega_1)) =
\Phi(\alpha+_0\beta\cdot_1\omega_1)=\Phi(\alpha+_0\beta) =
\Phi(\alpha)+_{0'}\Phi(\beta) =
\Phi(\alpha)+_{0'}\Phi(\beta)\cdot_{1'}\sigma_1 =
\tau_{\Phi(\alpha),\Phi(\beta)}(\sigma_1) =
g_{\Phi(\alpha),\Phi(\beta)}(\sigma_1)$. Hence, again by sharp
$2$-transitivity, $f(g_{\alpha,\beta})=g_{\Phi(\alpha),\Phi(\beta)}$.

For each $\tau_{\alpha,\beta}\in T_2(\Omega)$ there exists, by sharp
$2$-transitivity of $G$ on $\Omega$ a unique $g_{\alpha,\beta}\in G$
such that $g_{\alpha,\beta}(\omega_0)=\tau_{\alpha,\beta}(0)=\alpha$,
and
$g_{\alpha,\beta}(\omega_1)=\tau_{\alpha,\beta}(1)=\alpha+_0\beta$.

For each morphism $(f,\Phi)\colon
(G,\Omega,\omega_0,\omega_1)\rightarrow (H,\Sigma,\sigma_0,\sigma_1)$
in $\mathfrc{s2t-Gp}$ we have to check the naturality condition 
\begin{equation*}
(f,\Phi)\circ\gamma_{(G,\Omega,\omega_0,\omega_1)} =
\gamma_{(H,\Sigma,\sigma_0,\sigma_1)}\circ (f_{\Phi},\Phi)\tag{$*$}
\end{equation*}
where $f_{\Phi}\colon (T_2(\Omega),\circ)\rightarrow
(T_2(\Sigma),\circ)$ maps $\tau_{\alpha,\beta}\colon
\Omega\rightarrow\Omega$ to
$\tau_{\Phi(\alpha),\Phi(\beta)}\colon\Sigma\rightarrow\Sigma$.

The left hand side of $(*)$ sends $\tau_{\alpha,\beta}\in T_2(\Omega)$
to $(f,\Phi)(g_{\alpha,\beta})=f(g_{\alpha,\beta})\in H$. The right
hand side maps $\tau_{\alpha,\beta}$ to
$\gamma_{(H,\Sigma,\sigma_0,\sigma_1)}(f_{\Phi}(\tau_{\alpha,\beta}))
=
\gamma_{(H,\Sigma,\sigma_0,\sigma_1)}(\tau_{\Phi(\alpha),\Phi(\beta)})
= h_{\Phi(\alpha),\Phi(\beta)}\in H$.

Clearly $f(g_{\alpha,\beta})=h_{\Phi(\alpha),\Phi(\beta)}$ since both
elements of the sharply $2$-transitive group $H$ on $\Sigma$ send
$\sigma_0$ to $\Phi(\alpha)$ and $\sigma_1$ to $\Phi(\beta)$.

Hence $\gamma\colon L\circ K\Rightarrow 1_{\mathfrc{s2t-Gp}}$ is a
natural transformation.
\end{proof}

Since a full and faithful functor reflects isomorphisms we immediately get 

\begin{property}
Let $(G,F)$ and $(G',F')$ be sharply $2$-transitive permutation
groups. Then $(G,F)$ and $(G',F')$ are isomorphic as permutation
groups if and only if the associated neardomains $(F,+,.)$ and
$(F',+',.')$ are isomorphic in $\mathfrc{n-Dom}$.
\end{property}
(For a noncategorical proof, see \cite[(6.3)]{Kerb74}.)

The equivalence obtained in Theorem~\ref{trm:3.8} can be restricted to
interesting subcategories of $\mathfrc{s2t-Gp}$ and
$\mathfrc{n-Dom}$ respectively, which sheds new light on the possible
difference between neardomains and nearfields.

\begin{theorem}\label{trm3.10}
Let $\mathfrc{s2t-Gp}_{A}$ be the full subcategory of $\mathfrc{s2t-Gp}$
on objects $(G,\Omega,\omega_0,\omega_1)$ in which the subset $A$ (as
defined in~(\ref{adef})) is a \emph{subgroup} of $G$.

Then the functor $K$ restricts (and corestricts) to an equivalence 
$$
K_A\colon \mathfrc{s2t-Gp}_A\longrightarrow\mathfrc{n-Fld},
$$
where $\mathfrc{n-Fld}$ denotes the full subcategory of
$\mathfrc{n-Dom}$ with nearfields as objects. 
\end{theorem}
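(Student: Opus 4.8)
The plan is to show that the equivalence $K$ of Theorem~\ref{trm:3.8} sends an object of $\mathfrc{s2t-Gp}_A$ to a nearfield and, conversely, that $L$ sends a nearfield to an object of $\mathfrc{s2t-Gp}_A$; since $\mathfrc{s2t-Gp}_A$ and $\mathfrc{n-Fld}$ are full subcategories, this is all that is needed. Indeed, once we know that $K$ restricts to a functor $K_A\colon\mathfrc{s2t-Gp}_A\to\mathfrc{n-Fld}$ and $L$ restricts to $L_A\colon\mathfrc{n-Fld}\to\mathfrc{s2t-Gp}_A$, the natural isomorphisms $1_{\mathfrc{n-Dom}}\cong K\circ L$ and $L\circ K\cong 1_{\mathfrc{s2t-Gp}}$ automatically restrict to the subcategories, giving the desired equivalence.

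First I would prove the key object-level fact: for an object $(G,\Omega,\omega_0,\omega_1)$ of $\mathfrc{s2t-Gp}$, the set $A$ is a subgroup of $G$ if and only if the associated neardomain $(\Omega,+_0,\cdot_1)$ is a nearfield. Recall from Property~\ref{prop:3.5} that $(A,\Omega,\omega_0)$ is a regular permutation set and that $\alpha+_0\beta = a(\beta)$ where $a\in A$ is the unique element with $a(\omega_0)=\alpha$; this is precisely the loop structure on $\Omega$ coming from the r.p.s. $A$ as in Property~\ref{prop0}. By Property~\ref{newprop} (or rather its converse direction), the left translations $\lambda_\alpha\colon\gamma\mapsto\alpha+_0\gamma$ of the loop $(\Omega,+_0)$ are exactly the elements of $A$: indeed $\lambda_\alpha$ is the unique element of $A$ sending $\omega_0$ to $\alpha$, because $\lambda_\alpha(\omega_0)=\alpha+_0\omega_0=\alpha$. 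Now $(\Omega,+_0)$ is a group precisely when its set of left translations $A$ is closed under composition, i.e. is a subgroup of $\Sym(\Omega)$ — this is the standard fact that a loop is a group iff its left translations form a group. By the definition of neardomain (conditions \ref{cond1}--\ref{rule6}) together with the remark that $(F,+,.)$ is a nearfield iff $(F,+)$ is a group, we conclude $A\le G$ iff $(\Omega,+_0,\cdot_1)$ is a nearfield. This settles that $K$ carries $\mathfrc{s2t-Gp}_A$ into $\mathfrc{n-Fld}$.

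For the reverse inclusion, let $(F,+,.)$ be a nearfield; I must show $L(F)=(T_2(F),F,0,1)$ lies in $\mathfrc{s2t-Gp}_A$, i.e. that the subset $A$ of $T_2(F)$ is a subgroup. By Lemma~\ref{lem:Achara}, $A=\{\tau_{\gamma,1}\mid\gamma\in F\}$, the set of additive translations. Since $F$ is a nearfield, $(F,+)$ is a group, and a direct computation gives $\tau_{\gamma,1}\circ\tau_{\delta,1} = \tau_{\gamma+\delta,\,1}$ (using that all $d_{a,b}=1$ in a nearfield, so $a+(b+x)=(a+b)+x$), hence $A$ is closed under composition and inversion and is a subgroup of $T_2(F)$. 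Since $K\circ L\cong 1_{\mathfrc{n-Dom}}$ identifies the neardomain $K(L(F))$ with $F$ itself, consistency is automatic, but in any case the two object-level computations are essentially the same group law on translations.

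With both inclusions in hand, $K_A$ and $L_A$ are well defined, and the restrictions of the natural isomorphisms $\eta\colon 1_{\mathfrc{n-Dom}}\cong K\circ L$ and $\gamma\colon L\circ K\cong 1_{\mathfrc{s2t-Gp}}$ to the respective full subcategories give natural isomorphisms $1_{\mathfrc{n-Fld}}\cong K_A\circ L_A$ and $L_A\circ K_A\cong 1_{\mathfrc{s2t-Gp}_A}$; by \cite[prop. 3.4.3 (3)]{Borc94}, $K_A$ is an equivalence. I expect the main (and only) real obstacle to be the clean verification that $A$ being a subgroup of $G$ is equivalent to the additive loop $(\Omega,+_0)$ being a group — the rest is bookkeeping. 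The slightly delicate point there is to be sure that the elements of $A$ really are the left translations of $(\Omega,+_0)$ and not some other transversal; this follows because each $a\in A$ satisfies $a(\gamma)=a(\omega_0)+_0\gamma$ (again Property~\ref{prop0}, applied to the r.p.s. $A$ with base point $\omega_0$), so $a=\lambda_{a(\omega_0)}$, and conversely every left translation arises this way by regularity of $A$ on $\Omega$.
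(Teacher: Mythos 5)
Your proposal is correct, and its overall architecture (show $K$ and $L$ restrict on objects, then note that the natural isomorphisms automatically restrict because both subcategories are full) is the same as the paper's. Where you genuinely diverge is in the key object-level fact that $A\leq G$ iff the associated neardomain $(\Omega,+_0,\cdot_1)$ is a nearfield: the paper does not prove this itself but imports it from Karzel, citing Theorem (7.1) of~\cite{Karz68} (the neardomain is a nearfield iff $J^2$ is a subgroup of $G$) together with Theorem (3.7) of~\cite{Karz68} ($J^2$ is a subgroup iff $A$ is), whereas you give a self-contained argument inside the paper's own framework: by Property~\ref{prop:3.5} and the definition of $+_0$, the elements of $A$ are exactly the left translations of the loop $(\Omega,+_0)$ (each $a\in A$ equals $\lambda_{a(\omega_0)}$ by regularity), and a loop is associative, hence a group, precisely when its left translations are closed under composition; closure plus regularity then gives inverses, so $A\leq G$ iff $(\Omega,+_0)$ is a group iff the neardomain is a nearfield. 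This buys a proof that stays entirely within the r.p.s./loop machinery of Section~2 and avoids the external references, at the cost of one small step you gloss over: the paper's remark only states that a nearfield has all $d_{a,b}=1$ and hence associative addition, while you use the converse (associative addition forces nearfield). That converse is a one-liner — apply axiom~\ref{rule6}, so $(a+b)+x=a+(b+x)=(a+b)+d_{a,b}x$, and the loop property's uniqueness gives $d_{a,b}x=x$ for all $x$, whence $d_{a,b}=1$ at $x=1$ — but it should be said explicitly. The direction ``nearfield $\Rightarrow A\leq T_2(F)$'' via Lemma~\ref{lem:Achara} and the computation $\tau_{\alpha,1}\circ\tau_{\beta,1}=\tau_{\alpha+\beta,1}$ coincides with the paper's treatment of $L_A$.
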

\begin{proof}
In Theorem (7.1) of~\cite{Karz68} it is shown that, for a sharply
$2$-transitive group on a set $\Omega$, the associated neardomain is a
nearfield if and only if $J^2=\{gh\mid g,h\in J\}$ (with $J$ the set
of involutions in $G$) is a subgroup of $G$. In connection with
Theorem (3.7) of~\cite{Karz68} it follows that $J^2$ is a subgroup of
$G$ if and only if $A$ is a subgroup of $G$.

Thus $K_A\colon \mathfrc{s2t-Gp}_A\longrightarrow\mathfrc{n-Fld}$ is a
functor into the category of nearfields.

On the other hand, the functor $L\colon \mathfrc{n-Dom}\rightarrow
\mathfrc{s2t-Gp}$ restricts (and corestricts) to a functor $L_A\colon
\mathfrc{n-Fld}\rightarrow\mathfrc{s2t-Gp}_A$. It suffices to notice
that when $(F,+,.)$ is a nearfield, $(F,+)$ and $(F\setminus\{0\},.)$
are groups. Hence $(A,\circ) = (\{\tau_{\gamma,1}\mid \gamma\in
F\},\circ)$ is a group since, for each $\alpha,\beta, x\in F$ we have
$(\tau_{\alpha,1}\circ\tau_{\beta,1})(x) =
\tau_{\alpha,1}(\beta+x)=\alpha+(\beta+x)=(\alpha+\beta)+x =
\tau_{\alpha+\beta,1}(x)$, which shows that
$\tau_{\alpha,1}\circ\tau_{\beta,1}=\tau_{\alpha+\beta,1}$. Similarly
$\tau_{\alpha,1}^{-1}=\tau_{-\alpha,1}$.

Finally one verifies that $K_A\circ L_A = 1_{\mathfrc{n-Fld}}$ (as in
the proof of Theorem~\ref{trm:3.8}) and that
$\gamma_A=(\gamma_{(G,\Omega,\omega_0,\omega_1)})_{(G,\Omega,\omega_0,\omega_1)}\colon
  L_A\circ K_A\Rightarrow 1_{\mathfrc{s2t-Gp}_A}$ is a natural
  isomorphism (as in the proof of Theorem~\ref{trm:3.8}).
\end{proof}
\textbf{Acknowledgement. }
We thank the referee for helpful suggestions.

\end{document}